\DeclareMathSymbol{\twoheadrightarrow} {\mathrel}{AMSa}{"10}
\def\Q{{\mathbb Q}}
                                              \def\mm{{\mathfrak m}}
\def\Z{{\mathbb Z}}
\def\F{{\mathbb F}}
                          \def\tors{\mathrm{tors}}
\def\Hom{\mathrm{Hom}}
\def\M{{\mathcal M}}
\def\fchar{\mathrm{char}}
\def\rk{\mathrm{rk}}
        \def\K_a{\bar{K}}
\def\M{\mathrm{M}}
\def\dim{\mathrm{dim}}
\def\K{{\mathcal{K}}}
\newtheorem{thm}{Theorem}[section]
\newtheorem{lem}[thm]{Lemma}
\newtheorem{cor}[thm]{Corollary}
\theoremstyle{definition}
\newtheorem{defn}[thm]{Definition}
\newtheorem{rem}[thm]{Remark}
\newtheorem{sect}[thm]{}
\title[Poincar\'e duality]{Poincar\'e duality and unimodularity}
\author{Yuri G. Zarhin}
\address{Department of Mathematics, Pennsylvania
State University, University Park, PA 16802, USA}
\email{zarhin\char`\@math.psu.edu}
\begin{document}
\begin{abstract}
It is well known that the cup-product pairing on the complementary
integral cohomology groups (modulo torsion) of a compact oriented
manifold is unimodular. We prove a similar result for the
$\ell$-adic cohomology groups of smooth algebraic varieties.
\end{abstract}

\maketitle

This is a corrected version of my paper \cite{ZarhinVdG} that was published in 2012; it turns out that \cite[Lemma 2.5]{ZarhinVdG} is wrong. Nevertheless the main results of \cite{ZarhinVdG} (Theorem \ref{main} and Corollary \ref{proper}) remain valid. Here we present their proofs that 
are slight modifications of arguments from \cite{ZarhinVdG}.
I am grateful to Thomas Geisser, who pointed out that Lemma 2.5 of  \cite{ZarhinVdG} is wrong.

\section{Introduction}
\label{intro}
 If  $\Lambda$  is a commutative ring with $1$  without zero
divisors and $M$ is a $\Lambda$-module,  then we write $M_{\tors}$
for its torsion submodule and $M/\tors$ for the quotient
$M/M_{\tors}$. Usually, we will use this notation when $\Lambda$ is
the ring of integers $\Z$ or the ring $\Z_{\ell}$ of $\ell$-adic
integers.

Let $\M$ be a compact connected oriented $d$-dimensional manifold
with boundary $\partial \M$. Let $i \le d$ be a nonnegative integer
and $H^i(\M;\Z)$ and $H^i(\M, \partial \M;\Z)$ are the corresponding
integral cohomology groups, which are finitely generated
$\Z$-modules. The cup-product pairing
$$H^i(\M;\Z) \times H^{d-i}(\M, \partial \M;\Z) \to \Z$$
gives rise to the pairing of free $\Z$-modules
$$H^i(\M;\Z)/\tors \times H^{d-i}(\M, \partial \M;\Z)/\tors \to \Z.$$
It is well known \cite[Ch. VI, Sect. 9, Th. 9.4 on pp.
357--358]{Bredon} that the latter pairing is {\sl perfect} or {\sl
unimodular} or {\sl duality pairing},  i.e., the induced
homomorphism
$$H^i(\M;\Z)/\tors \to \Hom_{\Z}( H^{d-i}(\M, \partial \M;\Z)/\tors,
\ \Z)$$ is an isomorphism.

The aim of this note is to prove a $\Z_{\ell}$-variant of this
result for the \'etale cohomology groups of a smooth algebraic
variety over an algebraically closed field of arbitrary
characteristic.

\begin{sect}
\label{prelimX}
Let $X$ be a separated connected smooth scheme of finite type over a
algebraically closed field $k$, $d=\dim(X)$, $\ell$ a prime $\ne
\fchar(k)$ and $a$ an integer. If  $i \le 2d$ is a nonnegative
integer then we write $H^i(X,\Z_{\ell}(a))$ and
$H_c^i(X,\Z_{\ell}(a))$ for the corresponding (twisted) $i$th
\'etale $\ell$-adic cohomology group and cohomology group with
compact support. It is known \cite[pp.
22--24]{Katz} that both $H^i(X,\Z_{\ell}(a))$ and
$H_c^i(X,\Z_{\ell}(a))$ are finitely generated $\Z_{\ell}$-modules;
in particular, both $H^i(X,\Z_{\ell}(a))/\tors$ and
$H_c^i(X,\Z_{\ell}(a))/\tors$ are free $\Z_{\ell}$-modules of finite
rank. If $b$ is an integer with $a+b=d$ then the cup-product pairing
$${H}^i(X,\Z_{\ell}(a))\times H_c^{2d-i}(X,\Z_{\ell}(b)) \to
H_c^{2d}(X,\Z_{\ell}(d))=\Z_{\ell}$$ gives rise to the
$\Z_{\ell}$-bilinear pairing of free $\Z_{\ell}$-modules of finite
rank
$${H}^i(X,\Z_{\ell}(a))/\tors\times H_c^{2d-i}(X,\Z_{\ell}(b))/\tors \to
\Z_{\ell},$$ which is known to be nondegenerate, i.e., its right and
left kernels are zero. In particular, the induced homomorphism of
free $\Z_{\ell}$-modules
$$H^i(X,\Z_{\ell}(a))/\tors \to
\Hom_{\Z_{\ell}}(H_c^{2d-i}(X,\Z_{\ell}(b))/\tors,\ \Z_{\ell})$$
is injective and its cokernel is finite \cite[Ch. 2, Sect. 1, p.
149]{FK}.
This implies that the ranks of free $\Z_{\ell}$-modules $H^i(X,\Z_{\ell}(a))/\tors$ and
$H_c^{2d-i}(X,\Z_{\ell}(b))/\tors,\ \Z_{\ell})$ do {\sl coincide}.
\end{sect}

\begin{thm}
\label{main}
Let $X$ be a separated connected smooth scheme of finite type over a
algebraically closed field $k$, $d=\dim(X)$, $\ell$ a prime $\ne
\fchar(k)$ , $a$ and $b$  integers
with $a+b=d$. Let $i \le 2d$ be a nonnegative
integer.

The pairing
$${H}^i(X,\Z_{\ell}(a))/\tors\times H_c^{2d-i}(X,\Z_{\ell}(b))/\tors \to
\Z_{\ell}$$ is perfect, i.e., the induced homomorphism
$$H^{i}(X,\Z_{\ell}(a))/\tors \to
\Hom_{\Z_{\ell}}(H_c^{2d-i}(X,\Z_{\ell}(b))/\tors, \ \Z_{\ell})$$
is an isomorphism of $\Z_{\ell}$-modules.
\end{thm}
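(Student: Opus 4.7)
The plan is to derive the theorem from Poincar\'e duality with finite coefficients (a known perfect pairing of finite groups) by passing to the inverse limit and converting the resulting Pontryagin-style duality back into a $\Z_\ell$-duality via the coefficient sequence $0 \to \Z_\ell \to \Q_\ell \to \Q_\ell/\Z_\ell \to 0$. Given the nondegeneracy with finite cokernel already recorded in the excerpt, the real content is to construct a canonical isomorphism and then check it agrees with the cup-product map.

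For each $n\ge 1$ the cup product yields a perfect pairing
\[
H^i(X,\Z/\ell^n(a)) \times H_c^{2d-i}(X,\Z/\ell^n(b)) \to \Z/\ell^n.
\]
Taking $\varprojlim_n$ (exact, since all terms are finite) and rewriting $\Hom(-,\Z/\ell^n) = \Hom(-,\Q_\ell/\Z_\ell)$ on $\ell^n$-torsion modules turns an inverse limit of Hom-groups into a Hom out of a colimit, yielding
\[
H^i(X,\Z_\ell(a)) \;\cong\; \Hom_{\Z_\ell}\bigl(H_c^{2d-i}(X,\Q_\ell/\Z_\ell(b)),\,\Q_\ell/\Z_\ell\bigr),
\]
since $\varinjlim_n H_c^{2d-i}(X,\Z/\ell^n(b)) = H_c^{2d-i}(X,\Q_\ell/\Z_\ell(b))$.

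Next, the long exact cohomology sequence for $0 \to \Z_\ell \to \Q_\ell \to \Q_\ell/\Z_\ell \to 0$, together with the fact that $H_c^j(X,\Q_\ell(b)) = H_c^j(X,\Z_\ell(b))\otimes_{\Z_\ell}\Q_\ell$, produces the short exact sequence
\[
0 \to H_c^{2d-i}(X,\Z_\ell(b))\otimes_{\Z_\ell}\Q_\ell/\Z_\ell \to H_c^{2d-i}(X,\Q_\ell/\Z_\ell(b)) \to H_c^{2d-i+1}(X,\Z_\ell(b))_\tors \to 0.
\]
Applying $\Hom(-,\Q_\ell/\Z_\ell)$ (exact on torsion $\Z_\ell$-modules because $\Q_\ell/\Z_\ell$ is divisible) and using the identification
\[
\Hom_{\Z_\ell}\bigl(M\otimes_{\Z_\ell}\Q_\ell/\Z_\ell,\,\Q_\ell/\Z_\ell\bigr) \;=\; \Hom_{\Z_\ell}(M,\,\Z_\ell) \;=\; \Hom_{\Z_\ell}(M/\tors,\,\Z_\ell)
\]
for finitely generated $M$ converts the isomorphism above into a short exact sequence
\[
0 \to F \to H^i(X,\Z_\ell(a)) \to \Hom_{\Z_\ell}\bigl(H_c^{2d-i}(X,\Z_\ell(b))/\tors,\,\Z_\ell\bigr) \to 0,
\]
in which $F=\Hom(H_c^{2d-i+1}(X,\Z_\ell(b))_\tors,\Q_\ell/\Z_\ell)$ is finite and the right-hand term is torsion-free. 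Quotienting by torsion collapses $F$ and delivers the claimed isomorphism.

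The main obstacle I anticipate is the compatibility check: verifying that the map produced by this chain of identifications---mod-$\ell^n$ Poincar\'e duality, inverse limit, Pontryagin-type duality, and the coefficient sequence---coincides, up to torsion, with the map induced by the $\Z_\ell$-cup product in the statement. Formally this reduces to functoriality of cup products under coefficient change and their compatibility with $\varprojlim$ and $\varinjlim$, each of which is standard; but the diagram chase through the several identifications, and in particular the bookkeeping of the connecting homomorphism that gives the $H_c^{2d-i+1}(X,\Z_\ell(b))_\tors$ contribution, is the most delicate point.
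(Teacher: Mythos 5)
Your argument is correct, but it takes a genuinely different route from the paper's. The paper never introduces $\Q_{\ell}/\Z_{\ell}$ coefficients: it isolates a self-contained piece of linear algebra over a complete discrete valuation ring (Theorem~\ref{perfect}) asserting that for \emph{any} projective system of perfect pairings $e_n: H_n \times T_n \to \Lambda_n$ of finite $\Lambda_n$-modules with finitely generated limits, the limit pairing is perfect modulo torsion; its proof runs through the injectivity of $H/\pi^n H \to H_n$ for large $n$ (Lemma~\ref{univcoeff}, via a projective limit of nonempty finite sets), the existence at each finite level of a functional taking the value $1$ on an element with $\pi^{n-1}h_n \ne 0$ (Lemma~\ref{n0}), a second compactness argument producing $t$ with $e(h,t)=1$ (Lemma~\ref{value1}), and finally an induction on rank that splits off orthogonal rank-one pieces (Lemma~\ref{value2}). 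You instead convert the limit of the finite dualities into a Pontryagin-type duality against $H_c^{2d-i}(X,\Q_{\ell}/\Z_{\ell}(b))$ and unwind it with the coefficient sequence $0 \to \Z_{\ell} \to \Q_{\ell} \to \Q_{\ell}/\Z_{\ell} \to 0$; this is close in spirit to the derived-category argument that the paper's closing remark attributes to Deligne, and to the universal coefficients statement of Milne that merely \emph{inspired} Lemma~\ref{univcoeff}. Your route buys extra information for free: since the third term of your final short exact sequence is torsion-free, your finite group $F$ must equal $H^i(X,\Z_{\ell}(a))_{\tors}$, giving the torsion duality $H^i(X,\Z_{\ell}(a))_{\tors} \cong \Hom_{\Z_{\ell}}\bigl(H_c^{2d-i+1}(X,\Z_{\ell}(b))_{\tors},\,\Q_{\ell}/\Z_{\ell}\bigr)$, which the paper's method does not directly produce. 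The paper's route buys elementarity and generality: it needs no exactness-of-limits manipulations beyond finiteness, and no construction of the long exact sequence for $\Q_{\ell}/\Z_{\ell}$ coefficients, which (since $\Q_{\ell}$ and $\Q_{\ell}/\Z_{\ell}$ are not torsion sheaves) you must manufacture from the sequences $0 \to \Z/\ell^m \to \Z/\ell^{m+n} \to \Z/\ell^n \to 0$ by passing to appropriate limits -- a point you assert but should make explicit. The compatibility you rightly flag as the delicate step is real but routine: the essential square is that the reduction map $H_{n+1} \to H_n$ is adjoint, under $e_{n+1}$ and $e_n$, to the map $T_n \to T_{n+1}$ induced by the inclusion $\Z/\ell^n \hookrightarrow \Z/\ell^{n+1}$, which follows from functoriality of cup product in pairings of coefficient sheaves; once that is in place, tracing an element $(h_n)$ through your identifications returns exactly the functional $t \mapsto e(h,t)$ of the limit cup-product pairing, so your isomorphism is the map in the statement.
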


When $X$ is complete (e.g., projective),
$H_c^{2d-i}(X,\Z_{\ell}(b))=H^{2d-i}(X,\Z_{\ell}(b))$   and we
obtain the following corollary.

\begin{cor}
\label{proper}
 Let $X$ be a separated connected $d$-dimensional complete smooth
scheme of finite type over an algebraically closed field $k$. Let
$\ell$ be a prime different from $\fchar(k)$, $a$ and $b$  integers
with $a+b=d$. If $i \le 2d$ is a nonnegative integer then the
pairing
$$H^i(X,\Z_{\ell}(a))/\tors\times {H}^{2d-i}(X,\Z_{\ell}(b))/\tors \to
\Z_{\ell},$$ induced by cup-product pairing is perfect, i.e., the
induced homomorphism
$$H^{i}(X,\Z_{\ell}(a))/\tors \to
\Hom_{\Z_{\ell}}({H}^{2d-i}(X,\Z_{\ell}(b))/\tors, \ \Z_{\ell})$$ is
an isomorphism of $\Z_{\ell}$-modules.
\end{cor}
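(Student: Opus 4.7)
The plan is to strengthen the given nondegenerate pairing (with its finite cokernel) to a perfect pairing by invoking a derived-category version of Poincar\'e duality over $\Z_\ell$, which explicitly identifies the torsion and free parts of $H^{2d-i}_c(X, \Z_\ell(b))$ in terms of the groups $H^{*}(X, \Z_\ell(a))$.

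The first step is the $\Z_\ell$-Poincar\'e duality in the derived category of finitely generated $\Z_\ell$-modules, namely the quasi-isomorphism
$$R\Gamma_c(X, \Z_\ell(b))[2d] \simeq R\Hom_{\Z_\ell}(R\Gamma(X, \Z_\ell(a)), \Z_\ell),$$
a consequence of the six-functor formalism for constructible $\ell$-adic sheaves on $X$, realized naturally by cup product. Taking $H^{2d-i}$ of both sides and applying the universal-coefficient short exact sequence for $R\Hom(-,\Z_\ell)$ over the PID $\Z_\ell$ yields
$$0 \to \mathrm{Ext}^1_{\Z_\ell}(H^{i+1}(X, \Z_\ell(a)), \Z_\ell) \to H^{2d-i}_c(X, \Z_\ell(b)) \to \Hom_{\Z_\ell}(H^i(X, \Z_\ell(a)), \Z_\ell) \to 0.$$

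For any finitely generated $\Z_\ell$-module $G$, the group $\mathrm{Ext}^1_{\Z_\ell}(G, \Z_\ell)$ depends only on $G_{\tors}$ and is finite (non-canonically isomorphic to $G_{\tors}$), whereas $\Hom_{\Z_\ell}(G, \Z_\ell) = \Hom_{\Z_\ell}(G/\tors, \Z_\ell)$ is free of finite rank. Consequently the finite submodule on the left of the displayed sequence coincides with the torsion subgroup of $H^{2d-i}_c(X, \Z_\ell(b))$, and the sequence descends to a canonical isomorphism
$$H^{2d-i}_c(X, \Z_\ell(b))/\tors \xrightarrow{\sim} \Hom_{\Z_\ell}(H^i(X, \Z_\ell(a))/\tors, \Z_\ell).$$
Since the derived quasi-isomorphism is realized by cup product, this identification agrees with the transpose of the map $\phi$ appearing in the theorem, and dualizing gives the claimed perfect pairing.

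The principal obstacle is the derived-category Poincar\'e duality itself, together with the compatibility of the resulting isomorphism with the cup product pairing; the weaker statement in \cite{FK} (nondegeneracy plus finite cokernel) is not enough on its own, and one must track the torsion subgroups carefully. An alternative elementary route is to start from the perfect mod-$\ell^n$ Poincar\'e pairing $H^i(X,\Z/\ell^n(a))\times H^{2d-i}_c(X,\Z/\ell^n(b)) \to \Z/\ell^n$ at each level $n\ge 1$, combine it with the universal coefficient sequences
$$0 \to H^j(X,\Z_\ell(c))/\ell^n \to H^j(X,\Z/\ell^n(c)) \to H^{j+1}(X,\Z_\ell(c))[\ell^n] \to 0$$
(and the compact-support analog), and pass carefully to the inverse limit, matching free ranks on both sides and separating the torsion contributions via a Nakayama-type argument; this encodes exactly the same information as the derived statement above.
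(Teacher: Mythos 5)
Your argument is correct, but it is a genuinely different route from the one the paper takes --- in fact it is exactly the alternative that the paper relegates to a closing remark attributed to Deligne, citing \cite[Sect.~3.2.6]{Deligne} and \cite[Th.~6.3]{Ekedahl}. The paper itself proves the Corollary in one line from Theorem~\ref{main} (completeness gives $H_c^{2d-i}=H^{2d-i}$), and Theorem~\ref{main} is proved by elementary means: starting from the perfect finite-level pairings $e_n$ on $H^i(X,\Z/\ell^n\Z(a))\times H_c^{2d-i}(X,\Z/\ell^n\Z(b))$, it passes to the limit via the linear-algebra machinery of Theorem~\ref{perfect} --- a duality criterion over a complete discrete valuation ring (Lemma~\ref{value2}), a lifting lemma for finite $\Lambda_n$-modules (Lemma~\ref{n0}), and an injectivity statement $T/\pi^nT\hookrightarrow T_n$ for large $n$ proved by a projective-limit-of-nonempty-finite-sets argument (Lemma~\ref{univcoeff}), not by rank-matching or Nakayama as your closing sketch suggests; so your ``alternative elementary route'' is close in spirit to, but vaguer than, what the paper actually does. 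Your main route, by contrast, invokes the adic derived-category duality $R\Gamma_c(X,\Z_\ell(b))[2d]\simeq R\Hom_{\Z_\ell}(R\Gamma(X,\Z_\ell(a)),\Z_\ell)$ and the universal-coefficient sequence over the DVR $\Z_\ell$; your bookkeeping there is sound (the finite $\mathrm{Ext}^1$ term is precisely the torsion of $H_c^{2d-i}(X,\Z_\ell(b))$ because the $\Hom$ quotient is torsion-free, and dualizing the resulting isomorphism of free modules gives perfectness). The trade-off: your approach is shorter and yields strictly more --- a canonical identification of the torsion of $H_c^{2d-i}(X,\Z_\ell(b))$ with $\mathrm{Ext}^1_{\Z_\ell}(H^{i+1}(X,\Z_\ell(a)),\Z_\ell)$ --- but it rests on the adic six-functor formalism of \cite{Ekedahl} (note that \cite{Deligne} alone treats torsion coefficients) together with the nontrivial compatibility of that quasi-isomorphism with cup product, which you rightly flag as the principal obstacle but do not discharge; the paper's argument is self-contained, using only the classical finite-coefficient Poincar\'e duality and the finiteness theorems \cite{Milne,Katz}.
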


In the next section we develop an elementary machinery that will
help us to deal with projective systems (and limits) of perfect
bilinear forms on finite $\Z/\ell^n\Z$-modules. Using it, we prove
Theorem \ref{main} in Section \ref{proofs}.

I proved Corollary \ref{proper} thirty years ago when I was
contemplating \cite{ZarhinIzv1982}. I stated Corollary \ref{proper}
and sketched its proof in my letters to Peter Schneider (April 22
and July 15, 1981, see \cite[Added in Proof on p. 142]{Schneider}).
Recently I learned from Alexei Skorobogatov that this result is
still missing in the literature but continues to be of  interest
\cite{Skorobogatov}. That is why I decided to publish it.

I am grateful to Alexey Parshin, Peter Schneider and Alexei
Skorobogatov for their interest in this subject. My special thanks go to Nick Katz and P. Deligne for stimulating comments. I am grateful to the referee, whose comments helped to improve the exposition.

The final version of this paper was prepared during my stay at the Weizmann Institute of Science in May--June of 2012: I am grateful to its Department of Mathematics for the hospitality.

\section{Linear algebra}
\label{algebra}

\begin{sect}
Let $E$ be a complete discrete valuation field, $\Lambda \subset E$
the corresponding discrete valuation ring with maximal ideal $\mm$.
Let $\pi \in \mm$ be an uniformizer, i.e., $\mm=\pi\Lambda$.
Clearly, $\Lambda$ coincides with the projective limit of its
quotients, the local rings
$$\Lambda_n=\Lambda/\mm^n=\Lambda/\pi^n\Lambda.$$
All $\Lambda_n$ carry the natural structure of $\Lambda$-modules. If
$M$ and $N$ are $\Lambda_n$-modules then
$$\Hom_{\Lambda}(M,N)=\Hom_{\Lambda_n}(M,N).$$
In addition, if $j$ is an integer such that $j\ge n$  then both $M$ and $N$ carry the
natural structure of $\Lambda_j$-modules and
$$\Hom_{\Lambda}(M,N)=\Hom_{\Lambda_n}(M,N)=\Hom_{\Lambda_j}(M,N).$$
There is a (non-canonical) isomorphism of $\Lambda_j$-modules
$$\Lambda_n \cong
\Hom_{\Lambda_j}(\Lambda_n,\Lambda_j)=\Hom_{\Lambda}(\Lambda_n,\Lambda_j)$$
that sends $1 \in \Lambda_n$ to the homomorphism
$$\Lambda_n \to \Lambda_j, \ \lambda+\pi^n\Lambda\mapsto \pi^{j-n}\lambda+\pi^j\Lambda.$$
If $U$ is a free $\Lambda$-module of finite rank then we write $\rk(U)$ for its rank.
\begin{lem}
\label{value2} Let $$\beta: U \times V \to \Lambda$$ be a
$\Lambda$-bilinear pairing between  free $\Lambda$-modules $U$ and
$V$ of finite rank.  
Let 
\begin{equation}
\label{Lmap}
q_{\beta}: V \to \Hom_{\Lambda}(U,\Lambda)\ v \mapsto \{u\mapsto \beta(u,v)\};
 \end{equation}
\begin{equation}
\label{Rmap}
s_{\beta}:U \to \Hom_{\Lambda}(V,\Lambda), \ u \mapsto \{v\mapsto \beta(u,v)\}
\end{equation}
be the  homomorphisms of $\Lambda$-modules attached to $\beta$.

Assume that  $\beta$ enjoys the following property.

If $u \in U\setminus \pi U$ then there exists $v\in V$ with
$\beta(u,v)=1$.

Then:

\begin{itemize}
\item[(i)]
 The homomorphism $q_{\beta}$  is surjective. 
\item[(ii)]
If $\rk(U)=\rk(V)$ then both 
homomorphisms $s_{\beta}$ and $q_{\beta}$ are isomorphisms, i.e., $\beta$ is a perfect pairing.
\end{itemize}
\end{lem}

\begin{proof}
Let us prove (i).
If $U=\{0\}$ then there is nothing to prove.
Now let us do induction by the rank of $U$. Assume that $U
\ne \{0\}$. Clearly, $U \ne \pi U$.
 Pick (indivisible) $u_0 \in U \setminus \pi U$. Then there exists $v_0\in V$
 with
$\beta(u_0,v_0)=1$. Put
$$U_1=\{u\in U\mid \beta(u,v_0)=0\}, \ V_1=\{v\in V\mid
\beta(u_0,v)=0\}.$$ Clearly,
$$U=U_1\oplus \Lambda u_0, \ V=V_1\oplus \Lambda v_0.$$
If $u\in U_1$ does not lie in $\pi U_1$ then it does not lie in $\pi
U=\pi U_1\oplus \pi\Lambda u_0$. Therefore there exists $v \in V$
with $\beta(u,v)=1$. We have $v=v_1+ \lambda v_0$ for some $v_1\in
V_1$ and $\lambda\in \Lambda$. Since $U_1$ and $v_0$ are orthogonal
with respect to $\beta$, $\beta(u,v_1)=1$.
 Now in order to prove the surjectiveness of $q_{\beta}$, one has only to apply the induction assumption to $\beta:
U_1\times V_1 \to \Lambda$, taking into account that $\rk(U_1)=\rk(U)-1$.

In order to prove (ii),  suppose that $\rk(U)=\rk(V)$ and denote this rank by $r$. Then $r$ is also the rank of the free $\Lambda$-module
$\Hom_{\Lambda}(U,\Lambda)$. So, 
$$q_{\beta}: U \to \Hom_{\Lambda}(V,\Lambda)$$ is a surjective homomorphism of free $\Lambda$-modules of the same  rank  $r$. The freeness of $\Hom_{\Lambda}(V,\Lambda)$ implies that  surjective  $q_{\beta}$ admits a ``section''
 and therefore the $\Lambda$-modules  $U$ and $\ker(q_{\beta})\oplus \Hom_{\Lambda}(V,\Lambda)$
 are isomorphic. Since $\ker(q_{\beta})$ is a $\Lambda$-submodule of free $U$ and $\Lambda$ is a principal ideal domain,  $\ker(q_{\beta})$  is also free and
 $$r=\rk(U)=\rk(\ker(q_{\beta}))+r,$$
 i.e.,  $\rk(\ker(q_{\beta}))=0$. This means that $\ker(q_{\beta})=\{0\}$, i.e., $q_{\beta}$ is injective. Since we already know that it is surjective, $q_{\beta}$ is an {\sl isomorphism}.
 
 It remains to check that $s_{\beta}$  is an isomorphism. If $r=0$, there is nothing to prove.
 So, assume that $r\ge 1$ and choose a basis $\{u_1, \dots, u_r\}$ of $U$.  Let 
 $\{u_1^{*}, \dots, u_r^{*}\}$ be the {\sl dual basis} in $\Hom_{\Lambda}(U,\Lambda)$.
 Since $q_{\beta}$ is an  isomorphism, there is a  basis $\{v_1, \dots, v_r\}$ of $V$ such that 
 $$q_{\beta}(v_i)=u_i^{*} \ \forall i \in \{1, \dots, r\},$$
   i.e.,
 $$\beta(u_i,v_i)=q_{\beta}(v_i)(u_i)=1; \ \
 \beta(u_i, v_j)=q_{\beta}(v_i)(u_j) =0 \ \forall j \ne i,   j\in \{1, \dots, r\}.$$
  Then
 $s_{\beta}:U \to \Hom_{\Lambda}(V,\Lambda)$ sends   every basis element $u_j$ to the homomorphism 
 $$s_{\beta}(u_i): \sum_{i=1}^r a_i v_i \mapsto a_j.$$
 Clearly, $\{s_{\beta}(u_i)\mid 1 \le i \le r\}$ is a basis in $\Hom_{\Lambda}(V,\Lambda)$, which implies that
 $s_{\beta}$  is an isomorphism. This ends the proof.
\end{proof}

\begin{sect}

Now and till the rest of this section we assume that $E$ is locally
compact, i.e., $E$ is either a finite algebraic extension of
$\Q_{\ell}$ or $\F_q((t))$. Then $\Lambda$ is compact and all  local
rings $\Lambda_n$ are finite. If a $\Lambda$-module $M$ is a finite
set then it is a finitely generated torsion $\Lambda$-module and
therefore is isomorphic to a finite direct sum of $\Lambda$-modules
$\Lambda_n$. (If $\pi^j M=\{0\}$ then   $n\le j$ for all such $n$). It
follows that if $\pi^j M=\{0\}$ then  the finite $\Lambda$-modules $M$ and
$\Hom_{\Lambda}(M,\Lambda_j)=\Hom_{\Lambda_j}(M,\Lambda_j)$ are
isomorphic; in particular, they have the same cardinality.
\end{sect}
\end{sect}

\begin{lem}
\label{n0} Let $M$  be a finite $\Lambda_n$-module. Let $x$ be an
element of $M$ such that $\pi^{n-1}x \ne 0$. Then there exists a
homomorphism of $\Lambda$-modules $\phi: M \to \Lambda_n$ with
$\phi(x)=1$.
\end{lem}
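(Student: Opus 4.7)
The plan is to reduce to the structure theorem already invoked in the preceding paragraph, which says that any finite $\Lambda$-module $M$ (hence in particular any finite $\Lambda_n$-module) decomposes as a direct sum of cyclic modules $\Lambda_{n_i}=\Lambda/\pi^{n_i}\Lambda$ with each $n_i\le n$. Once such a decomposition is fixed, constructing $\phi$ amounts to finding a single coordinate in which $x$ behaves ``maximally'', and then projecting onto that coordinate.

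Concretely, I would write $M=\bigoplus_{i=1}^{k}\Lambda_{n_i}$ with $1\le n_i\le n$, and decompose the given element as $x=(x_1,\dots,x_k)$ with $x_i\in\Lambda_{n_i}$. The hypothesis $\pi^{n-1}x\ne 0$ means that $\pi^{n-1}x_i\ne 0$ for at least one index $i$. For that index I would then argue two things: first, $\pi^{n-1}\Lambda_{n_i}\ne 0$ forces $n_i=n$, so the $i$-th summand is the full $\Lambda_n$; second, since $\pi^{n-1}x_i\ne 0$ in $\Lambda_n$, the element $x_i$ cannot lie in $\pi\Lambda_n$ (otherwise $\pi^{n-1}x_i\in\pi^{n}\Lambda_n=\{0\}$), hence $x_i$ is a unit of $\Lambda_n$.

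With such an index $i$ in hand, the required homomorphism is produced directly: define
\[
\phi:M\to\Lambda_n,\qquad \phi(y_1,\dots,y_k)=x_i^{-1}y_i,
\]
where $x_i^{-1}$ denotes the inverse of $x_i$ in the unit group $\Lambda_n^\times$. This is $\Lambda$-linear by construction, lands in $\Lambda_n$, and satisfies $\phi(x)=x_i^{-1}x_i=1$.

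The only delicate point, and the one I would state most carefully, is the existence of the direct-sum decomposition used at the outset; but this is precisely what the author has already recorded in the paragraph immediately preceding the lemma (together with the bound $n_i\le n$ coming from $\pi^n M=\{0\}$). Everything else is a routine unpacking of ``unit coordinate $\Rightarrow$ projection splits'', and there is no real obstacle. Alternatively, one could invoke the self-injectivity of the principal Artinian local ring $\Lambda_n$ to extend the obvious map $\Lambda x\cong\Lambda_n\to\Lambda_n$ from the cyclic submodule generated by $x$ to all of $M$, but the coordinate argument above is more elementary and fits the style of Section~\ref{algebra}.
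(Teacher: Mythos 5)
Your proof is correct, but it takes a genuinely different route from the paper's. You argue constructively: using the structure theorem (recorded, as you note, in the paragraph preceding the lemma, with the bound $n_i\le n$ coming from $\pi^nM=\{0\}$), you locate a coordinate $i$ where $\pi^{n-1}x_i\ne 0$, deduce $n_i=n$ and that $x_i$ is a unit of the local ring $\Lambda_n$, and take $\phi$ to be the rescaled coordinate projection $y\mapsto x_i^{-1}y_i$ — each step checks out, including the key implications $\pi^{n-1}\Lambda_{n_i}\ne 0\Rightarrow n_i\ge n$ and $x_i\in\pi\Lambda_n\Rightarrow\pi^{n-1}x_i=0$. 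The paper instead argues by contradiction and by counting: if no $\phi$ with $\phi(x)=1$ exists, then every $\phi\in\Hom_{\Lambda}(M,\Lambda_n)$ sends $x$ into the maximal ideal $\pi\Lambda_n$ (otherwise rescale by a unit), hence kills $y=\pi^{n-1}x$, so $\Hom_{\Lambda}(M,\Lambda_n)=\Hom_{\Lambda}(M/\Lambda y,\Lambda_n)$; but the same preceding paragraph shows a finite module and its $\Lambda_n$-dual have equal cardinality, and $M$, $M/\Lambda y$ have different orders since $y\ne 0$ — contradiction. Both proofs ultimately rest on the same structure theorem, but you use it directly (an explicit decomposition and an explicit $\phi$), while the paper uses only its cardinality consequence and never chooses a decomposition of $M$; your version is more explicit, the paper's slightly more economical in what it extracts from the structure theory. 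Your alternative sketch via self-injectivity of $\Lambda_n$ (extend $\Lambda x\cong\Lambda_n\to\Lambda_n$, $x\mapsto 1$, noting the annihilator of $x$ is exactly $\pi^n\Lambda$) is also valid and is a third legitimate route.
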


\begin{proof}
Clearly,  $y=\pi^{n-1}x$ is a nonzero element of $M$.
 Suppose the assertion of Lemma is not true. Then for all $\phi \in
\Hom_{\Lambda}(M,\Lambda_n)$ the image $\phi(x)$ is {\sl not} a unit
in $\Lambda_n$ and therefore lies in the maximal ideal $\pi
\Lambda_n$. It follows that
$$\phi(y) =\pi^{n-1}\phi(x)\in \pi^{n-1}\pi
\Lambda_n=\pi^n\Lambda_n=\{0\}.$$ This implies that every $\phi$
kills $y$ and therefore
$$\Hom_{\Lambda}(M,\Lambda_n)=\Hom_{\Lambda}(M/\Lambda
y,\Lambda_n).$$ Since $y\ne 0$,  the finite modules $M$ and
$M/\Lambda y$ have different orders and therefore their duals must
have different orders. Contradiction.
\end{proof}

The following 
definition
was inspired by the universal
coefficients theorem (UCT) for $\ell$-adic sheaves \cite[Ch. V, Sect. 1, Lemma 1.11 on p.
165]{Milne}.

\begin{defn}
\label{univcoeff}
 Let $\{T_n\}_{n=1}^{\infty}$ be a projective system of finite
$\Lambda$-modules $T_n$. Suppose that every $T_n$ is actually a
finite $\Lambda_n$-module. Suppose that the projective limit $T$ is
a finitely generated $\Lambda$-module. 

We say 
that $\{T_n\}$ satisfies  the  (UCT) condition if there exists a positive
integer $n_0$ such that for all $n\ge n_0$ the natural map
$$T/\pi^n T \to T_n$$ is injective.
\end{defn}

\begin{rem}
It was erroneously claimed in \cite[Lemma 2.5]{ZarhinVdG} that $\{T_n\}$   always satisfies the (UCT) condition.
\end{rem}

\begin{sect}
\label{s26}
Let us consider a projective system of triples $(H_n, T_n,e_n)$
of
the following type.

\begin{enumerate}
\item[(i)]
$H_n$ and $T_n$ are finite $\Lambda_n$-modules and the transition
maps
$$H_{n+1}\to H_n, \ T_{n+1} \to T_n$$
are homomorphisms of the corresponding $\Lambda$-modules.

\item[(ii)]
$e_n: H_n \times T_n \to \Lambda_n$ is a perfect pairing of finite
$\Lambda_n$-modules, i.e., the induced homomorphisms
$$H_n \to \Hom_{\Lambda_n}(T_n,\Lambda_n)=\Hom_{\Lambda}(T_n,\Lambda_n),$$
$$T_n \to
\Hom_{\Lambda_n}(H_n,\Lambda_n)=\Hom_{\Lambda}(H_n,\Lambda_n)$$ are
isomorphisms of finite $\Lambda_n$-modules. (In fact, the finiteness
implies that the first homomorphism is an isomorphism if and only if
the second one is an isomorphism.)

\item[(iii)] Both projective limits $H$ of $\{H_n\}$ and $T$ of
$\{T_n\}$ are {\sl finitely generated} $\Lambda$-modules. In
particular, $H/\tors$ and $T/\tors$ are free $\Lambda$-modules of
finite rank.
\end{enumerate}
\end{sect}

\begin{sect}
 Let us consider the  projective limit of $\{e_n\}$, which is the $\Lambda$-bilinear pairing
$$e: H \times T \to \Lambda.$$
In other words, if $h\in H$ corresponds to a sequence $\{h_n \in
H_n\}_{n=1}^{\infty}$ and $t\in T$ corresponds to a sequence $\{t_n
\in T_n\}_{n=1}^{\infty}$ then $e(h,t)\in \Lambda$ is the projective
limit of the sequence $e_n(h_n,t_n)\in \Lambda_n$. Clearly, $e$
kills $H_{\tors}$ and $T_{\tors}$ and therefore gives rise to the
$\Lambda$-bilinear pairing of free $\Lambda$-modules
$$\bar{e}: H/\tors \times T/\tors \to \Lambda.$$
\end{sect}

\begin{thm}
\label{perfect}
Let  $(H_n, S_n,e_n)$ be a projective system
satisfying the assumptions (i), (ii), (iii)  of Section \ref{s26}.
Suppose that  $\{H_n\}$ satisfies the   (UCT) condition.


If $\rk(H/\tors)=\rk(T/\tors)$ then the pairing
$$\bar{e}: H/\tors \times T/\tors \to \Lambda$$
is perfect, i.e., the corresponding homomorphisms of free
$\Lambda$-modules \eqref{Lmap}  and \eqref{Rmap} attached to $\beta:=\bar{e}$
$$H/\tors \to \Hom(T/\tors, \ \Lambda), \ T/\tors \to \Hom(H/\tors, \ \Lambda)$$
are isomorphisms.
\end{thm}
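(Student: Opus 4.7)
The plan is to apply Lemma \ref{value2} to the $\Lambda$-bilinear pairing $\bar{e}: H/\tors \times T/\tors \to \Lambda$. Hypothesis (iii) gives us that $U := H/\tors$ and $V := T/\tors$ are free $\Lambda$-modules of finite rank, so it suffices to check the two indivisibility conditions of Lemma \ref{value2}. By the evident symmetry, we only treat one: given $h \in (H/\tors)\setminus \pi(H/\tors)$, we must produce $t \in T/\tors$ with $\bar{e}(h,t)=1$.

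Fix an integer $n_0$ such that for all $n \ge n_0$ the natural map $H/\pi^n H \to H_n$ is injective (Lemma \ref{univcoeff} together with the Remark). Since $\Lambda$ is a discrete valuation ring and $H$ is finitely generated, we may split $H = H_{\tors}\oplus F$ with $F$ free; under this splitting $h$ lifts to $\tilde h \in F \subset H$, and we denote by $h_n \in H_n$ its image. The crucial point is that $\pi^{n-1}h_n \ne 0$ in $H_n$ for every $n \ge n_0$. Indeed, by the injectivity above, $\pi^{n-1}h_n = 0$ would mean $\pi^{n-1}\tilde h = \pi^n y$ for some $y \in H$; projecting this identity onto $F$ (where the $H_{\tors}$-part of $\pi^n y$ vanishes) gives $\pi^{n-1}\tilde h = \pi^n y_F$ in the torsion-free module $F$, hence $\tilde h = \pi y_F$, contradicting $h\notin \pi(H/\tors)$.

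Armed with $\pi^{n-1}h_n \ne 0$, Lemma \ref{n0} supplies $\phi_n \in \Hom_{\Lambda}(H_n,\Lambda_n)$ with $\phi_n(h_n)=1$, and the perfectness of $e_n$ (assumption (ii)) converts $\phi_n$ into an element $t_n \in T_n$ satisfying $e_n(h_n,t_n)=1$. To assemble these choices into a coherent limit, I introduce
\[
S_n = \{\,t \in T_n : e_n(h_n,t)=1\,\} \subseteq T_n, \qquad n \ge n_0.
\]
Compatibility of the pairings $e_n$ with the transition maps (together with the fact that $1 \in \Lambda_{n+1}$ maps to $1 \in \Lambda_n$) shows that $T_{n+1} \to T_n$ carries $S_{n+1}$ into $S_n$, so $\{S_n\}_{n\ge n_0}$ is a projective system of finite nonempty sets. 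Its limit is nonempty, yielding $t \in T = \varprojlim T_n$ with $e(h,t)=1$; passing to $T/\tors$, its image $\bar t$ satisfies $\bar{e}(h,\bar t)=1$, as required.

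The only nontrivial piece is the indivisibility calculation showing $\pi^{n-1}h_n \ne 0$; the rest is a routine combination of Lemma \ref{value2}, Lemma \ref{n0}, and the standard compactness argument for projective limits of finite nonempty sets. I therefore expect Step 2 (the lift-and-project argument using the freeness of $F$ and the injectivity furnished by Lemma \ref{univcoeff}) to be the decisive technical step.
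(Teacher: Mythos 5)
Your proposal is correct and follows essentially the same route as the paper: the author likewise combines Lemma \ref{value2}, Lemma \ref{univcoeff}, Lemma \ref{n0}, perfectness of the $e_n$, and the compactness of a projective limit of finite nonempty sets $S_n$, with the torsion split off via a free complement (the paper's $H^f$, $T^f$ playing the role of your $F$). Your indivisibility computation showing $\pi^{n-1}h_n \ne 0$ is exactly the step the paper asserts with the phrase ``$\pi^{n-1}h$ does not belong to $\pi^n H$ for all $n$,'' so you have merely made the same argument more explicit.
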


\begin{proof} We can choose a free $\Lambda$-submodule
$H^f$ in $H$ such that $H=H^f\oplus
H_{\tors}$. Similarly, we choose a free $\Lambda$-submodule $T^f$ in $T$ such that
$T=T^f\oplus T_{\tors}$.
Clearly,  there are isomorphisms of free $\Lambda$-modules
$$H/\tors \cong H^f, \ T/\tors \cong T^f.$$
In particular,  the ranks of $H^f$ and $T^f$ do {\sl coincide}.

In order to prove Theorem \ref{perfect}, it suffices to
check that the pairing
$$e: H^f \times T^f \to \Lambda$$
is perfect.

If $H^f=\{0\}$ and $T^f=\{0\}$ then there is nothing to prove. If both $H^f$ are
$T^f$ are nonzero modules then the theorem follows from Lemma \ref{value2} and the following lemma.
\begin{lem}
\label{value1}
Suppose that $H^f \ne\{0\}$. Then $H^f \ne \pi H^f$ and  for any $h \in
H^f\setminus \pi H^f$  there exists $t \in T^f$ with $e(h,t)=1$.


\end{lem}

\end{proof}

\begin{proof}[Proof of Lemma \ref{value1}]
Clearly, $H^f \ne \pi H^f$. Pick $h \in
H^f\setminus \pi H^f$. Then $\pi^{n-1}h$ does not belong to $\pi^n
H$ for all $n$. In other words, the image $h_n$ of $h$ in $H/\pi^n
H$ satisfies $$\pi^{n-1}h_n \ne 0.$$ 
Since $\{H_n\}$ satisfies the condition (UCT),
 there
exists a positive integer $n_0$ such that for all $n\ge n_0$ the
homomorphism of $\Lambda_n$-modules $H/\pi^n H\to H_n$ is injective.
So, we may view $H/\pi^n H$ as a submodule of $H_n$ and $h_n$ as an
element of $H_n$. By Lemma \ref{n0}, there exists $t_n \in T_n$ with
$$e_n(h_n,t_n)=1\in\Lambda_n.$$
So, for each $n\ge n_0$ the subset
$$S_n=\{t_n\in T_n\mid e(h_n,t_n)=1\}\subset T_n$$
is non-empty. It is also finite and therefore the projective limit
$S$ of the sequence $\{S_n\}$ is a non-empty subset of $T$. Pick $s\in S$. Clearly,
$e(h,s)\in \Lambda$ is the projective limit of the sequence $n
\mapsto 1\in \Lambda_n \ (n\ge n_0)$. Therefore $e(h,s)=1\in\Lambda$
and we are almost done. Since $T=T^f\oplus T_{\tors}$, we have
$$s=t+w, \ t \in T^f, w \in T_{\tors}.$$
Since $e(h,w)=0$, we have
$$e(h,t)=e(h,s)-e(h,w)=1-0=1.$$
\end{proof}

\begin{rem}
The conclusion of Theorem \ref{perfect} remains valid if one replaces the equality of ranks assumption by the assumption that $\{T_n\}$ also satisfies the condition (UCT). The proof essentially remains the same: one has only to apply Lemma \ref{value1} one more time, permuting $\{H_n\}$ and $\{T_n\}$, and use \cite[Lemma 2.2]{ZarhinVdG} (instead of Lemma 
\ref{value2}).

\end{rem}

\section{Proof of main results}
\label{proofs}
 Let us put $$E=\Q_{\ell}, \Lambda=\Z_{\ell},
\pi=\ell.$$ We get
$$\Lambda_n=\Z_{\ell}/\ell^n \Z_{\ell}=\Z/\ell^n\Z.$$
We want to deduce Theorem \ref{main}  from Theorem \ref{perfect}.
In the notation of Section \ref{algebra} we put
$$H_n=H^i(X,\Z/\ell^n\Z\ (a)), \ T_n=
H_c^{2d-i}(X,\Z/\ell^n\Z\ (b))$$ and
$$e_n:H^i(X,\Z/\ell^n\Z \ (a))\times
H_{c}^{2d-i}(X,\Z/\ell^n\Z\ (b))\to
H_{c}^{2d}(X,\Z/\ell^n\Z\ (d))=\Z/\ell^n\Z$$ the pairing induced by
cup-product. By definition, the projective limit $H$ of the sequence
$H_n=H^i(X,\Z/\ell^n\Z(a))$ is $H^i(X,\Z_{\ell}(a))$ and the
projective limit $T$ of the sequence $T_n=H_c^{2d-i}(X,\Z/\ell^n\Z(b))$ is
$H_c^{2d-i}(X,\Z_{\ell}(b))$. In the notation of Theorem \ref{perfect}, we need to check the {\sl unimodularity} of $\bar{e}$.

Recall that all the $\ell$-adic cohomology groups $H^i(X,\Z/\ell^n\Z (a))$ and
$H_{c}^{2d-i}(Z,\Z/\ell^n\Z(b))$ are finite $\Z/\ell^n\Z$-modules
and all  the $\Z_{\ell}$-modules $H^i(X,\Z_\ell(a))$ and
$H_c^{2d-i}(X,\Z_{\ell}(b))$ are finitely generated. These
finiteness results are fundamental finiteness theorems in \'etale
cohomology from {\bf SGA 4, 4$\frac{1}{2}$, 5}, see \cite[pp.
22--24]{Katz} for precise references. The perfectness of the pairing
$e_n$ is the Poincar\'e duality in \'etale cohomology (see \cite[Ch.
VI, Sect. 11, Cor. 11.2 on p. 276]{Milne}, \cite[p. 23]{Katz}).
Applying the universal coefficients theorem \cite[Ch. V, Sect. 1, Lemma 1.11 on p.
165]{Milne} to the $\ell$-adic sheaf $\Z_{\ell}(a)$,  we obtain that the natural homomorphism 
$$H/\ell^n H=H^i(X,\Z_{\ell}(a))/\ell^n \to H^i(X,\Z/\ell^n\Z\ (a))=H_n$$ is {\sl injective}
for all positive integers $n$.
This implies that 
the projective system $H_n=H^i(X,\Z/\ell^n\Z\ (a))$ satisfies the  (UCT) condition.
Recall (Subsection \ref{prelimX}) that the ranks of free $\Z_{\ell}$-modules 
$$H/\tors=H^i(X,\Z_{\ell} (a))/\tors \ \text{ and } \ T/\tors=H_c^{2d-i}(X,\Z_\ell(b))/\tors$$ do {\sl coincide}.

 Now Theorem \ref{main} follows from Theorem \ref{perfect}.

\begin{rem}
P. Deligne pointed out that one may deduce Theorem \ref{main} from the  derived category version of Poincar\'e duality 
\cite[Sect. 3.2.6]{Deligne}, \cite[Th. 6.3]{Ekedahl}.
\end{rem}


\begin{thebibliography}{99}

\bibitem{Bredon} G. Bredon, Topology and Geometry, GTM {\bf 139},
Springer Verlag, New York Berlin Heidelberg, 1993.

\bibitem{Deligne} P. Deligne, {\sl La formule de dualit\'e globale}. Expos\'e XVIII. In: {\bf SGA 4}, Tome 3. Springer Lecture Notes in Math. {\bf 305} (1973).

\bibitem{Ekedahl} T. Ekedahl,  {\sl On the adic formalism}. 
In: The Grothendieck Festschrift (vol. II), Progress  Math. {\bf 87}, pp.  197--218. Birkh\"auser, Boston Basel Stutgart, 1990.

\bibitem{FK} E. Freitag, R. Kiehl, Etale cohomology and the Weil
conjecture. Ergebnisse der Math. 3 Folge, Band {\bf 13}, Springer
Verlag, Berlin Heidelberg New York, 1988.

\bibitem{Katz} N.M. Katz, Review of $l$-adic cohomology. In:
Motives. Proc. Symp. Pure Math. {\bf 55}, Part 1 (1994), pp. 21--30.
American  Mathematical Society, Providence, RI.

\bibitem{Milne} J.S. Milne, \'Etale Cohomology. Princeton University
Press, 1980.

\bibitem{Schneider} P. Schneider,  {\sl On the values of the zeta function of a variety over a finite field}.
Compositio Math.  {\bf 46}  (1982),  133--143.

\bibitem{Skorobogatov} J.-L. Colliot-Th\'el\`ene,  A. N. Skorobogatov,  {\sl Descente galoisienne sur le groupe de Brauer}. J. reine angew. Math. {\bf 682} (2013), 141--165;
arXiv:1106.6312 .

\bibitem{ZarhinIzv1982} Yu. G. Zarhin, {\sl The Brauer group of an abelian variety over a finite
field}.  Izv. Akad. Nauk SSSR Ser. Mat.  {\bf 46}  (1982), 211--243;
Mathematics of the USSR-Izvestiya {\bf 20} (1983), 203--234.

\bibitem{ZarhinVdG}  Yu. G. Zarhin, {\sl Poincar\'e duality and unimodularity}. In:
Geometry and Arithmetic (C. Faber, G. Farkas, R. de Jong, eds). European Mathematical Society, 
Z\"urich, 2012.


\end{thebibliography}
\end{document}